\newtheorem{theorem}{Theorem}[section]
\newtheorem{lemma}[theorem]{Lemma}
\newtheorem{definition}[theorem]{Definition}
\newtheorem{example}[theorem]{Example}
\newtheorem{fact}[theorem]{Fact}
\newcommand{\kmax}{k_{\mathrm{max}}}
\newcommand{\kmin}{k_{\mathrm{min}}}
\newcommand{\summ}{\displaystyle\sum}
\newcommand{\prodd}{\displaystyle\prod}
\newcommand{\F}{\mathbb{F}}
\newcommand{\R}{\mathbb{R}}
\newcommand{\mon}{\mathcal{M}}
\newcommand{\K}{\mathcal{K}}
\newcommand{\supker}{\mathsf{suppker}}
\newcommand{\supp}{\mathsf{supp}}
\newcommand*{\defeq}{\stackrel{\text{def}}{=}}
\begin{document}
\title{A Cauchy-Davenport theorem for linear maps}
\author{Simao Herdade\thanks{Department of Mathematics, Rutgers University.  {\tt simaoh@math.rutgers.edu}.} \and 
John Kim\thanks{Department of Mathematics, Rutgers University.  Research supported in part by NSF Grant Number DGE-1433187. {\tt jonykim@math.rutgers.edu }.} \and
Swastik Kopparty\thanks{Department of Mathematics \& Department of Computer Science, Rutgers University. Research supported in part by a Sloan
Fellowship and NSF grant CCF-1253886. {\tt swastik@math.rutgers.edu }.}}
\maketitle

\begin{abstract} 
We prove a version of the Cauchy-Davenport theorem for general linear maps.
For subsets $A,B$ of the finite field $\F_p$,
the classical Cauchy-Davenport theorem gives a lower bound
for the size of the sumset $A+B$ in terms of the sizes of the sets $A$ and $B$.
Our theorem considers a general linear map $L: \F_p^n \to \F_p^m$, and subsets $A_1, \ldots, A_n \subseteq \F_p$,
and gives a lower bound on the size of $L(A_1 \times A_2 \times \ldots \times A_n)$ in terms of the sizes of the sets $A_1, \ldots, A_n$.

Our proof uses Alon's Combinatorial Nullstellensatz and a variation
of the polynomial method.
\end{abstract}

\section{Introduction}

Let $p$ be a prime, and let $\F_p$ denote the finite field of integers modulo $p$.
The classical Cauchy-Davenport theorem states that if $A, B \subseteq \F_p$, 
then the sumset $A + B$ (defined to equal $\{a + b \mid a \in A, b \in B \}$)
satisfies the inequality: $ |A + B| \geq |A| + |B| - 1$, provided $p \geq |A| + |B| - 1$.
It is instructive to compare this with the elementary inequality  $|A + B| \geq |A| + |B| - 1$
for $A , B \subseteq \R$ (this has a simple proof using the natural order on $\mathbb R$).
The Cauchy-Davenport theorem says that this inequality continues to hold mod $p$,
for $p$ large enough.


The Cauchy-Davenport theorem can be seen as a statement about the size of the image of the product set $A \times B$ under the the map $+ : \F_p \times \F_p \to \F_p$. Here we study a similar phenomenon for general linear maps.
Let $L : \F_p^n \to \F_p^m$ be an $\F_p$-linear map. 
For subsets $A_1, \ldots, A_n \subseteq \F_p$, we define
$$L(A_1, \ldots, A_n) = \{ L(a_1, \ldots, a_n) \mid a_i \in A_i \mbox{ for each $i$} \}.$$
(Equivalently, this is the image of $A_1 \times A_2 \times \ldots \times A_n$ under $L$.)
We are interested in a Cauchy-Davenport theorem for $L$: given integers $k_1, \ldots, k_n$, what is the minimum possible size, over subsets $A_i \subseteq \F_p$ with $|A_i| = k_i$, of $|L(A_1, \ldots, A_n)|$?
This question is already interesting for the map $L^*: \F_p^3 \to \F_p^2$, given by $L(x,y,z) = (x+y, x+z)$.

Our main theorem, Theorem~\ref{thm:main}, gives a lower bound on the size of $L(A_1,\ldots, A_n)$. For now
we just state an interesting special case of this theorem, where all the $|A_i| = k$.
While the bound itself is quite complex, the bound (surprisingly) turns out
to be tight for every linear map $L$ when $m = n-1$.

\begin{theorem}
Let $m < n$, and let $L : \F_p^n \to \F_p^m$ be a linear map with rank $m$.
Let $v$ be a nonzero vector in $\ker(L)$ with minimal support, and let $s$ be the size of its support.
Let $k$ be an integer with $p \geq 2k-1$.

Then for every $A_1, \ldots, A_n \subseteq \F_p$, with $|A_i| = k$ for all $i \leq n$, we have:
$$ |L(A_1, \ldots, A_n)| \geq \left(k^{s} - (k-1)^{s} \right) \cdot k^{m-s+1}.$$
\end{theorem}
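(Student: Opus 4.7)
\emph{Overview.} The plan is to decompose $L(A_1, \ldots, A_n)$ as a Minkowski sum $X + Y$ in $\F_p^m$ so that $|X| \geq k^s - (k-1)^s$ and the image of $Y$ in a suitable quotient of $\F_p^m$ has size at least $k^{m-s+1}$; these two lower bounds multiply to give the theorem via a linear-algebraic product inequality for $|X+Y|$.

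\emph{Setup.} Let $c_1, \ldots, c_n$ denote the columns of $L$. After reordering we may assume $\supp(v) = \{1, \ldots, s\}$ with all $v_i \neq 0$. The kernel relation $\sum_{i=1}^{s} v_i c_i = 0$ forces $c_1 \in \mathrm{span}(c_2, \ldots, c_s)$, and minimality of $\supp(v)$ forces $c_2, \ldots, c_s$ to be linearly independent: any nontrivial relation among them would yield a kernel vector with support strictly inside $\{2, \ldots, s\}$. Put $V = \mathrm{span}(c_2, \ldots, c_s)$, of dimension $s-1$. Splitting $L(x) = L^{(1)}(x_1, \ldots, x_s) + L^{(2)}(x_{s+1}, \ldots, x_n)$ along the first $s$ columns yields $L(A_1, \ldots, A_n) = X + Y$, where $X = L^{(1)}(A_1, \ldots, A_s) \subseteq V$ and $Y = L^{(2)}(A_{s+1}, \ldots, A_n)$.

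\emph{The factor $k^s - (k-1)^s$.} Since $\ker L^{(1)} = \langle v \rangle$, the map $L^{(1)}$ factors through $\F_p^s/\langle v \rangle$, so $|X|$ equals the image size on $A_1 \times \cdots \times A_s$ of the map $L'(x_1, \ldots, x_s) = (x_2 + \alpha_1 x_1, \ldots, x_s + \alpha_{s-1} x_1)$, where $\alpha_i = -v_{i+1}/v_1 \neq 0$. I will prove $|L'(A_1, \ldots, A_s)| \geq k^s - (k-1)^s$ by induction on $s$, the base case $s=2$ being Cauchy--Davenport. For the inductive step, dropping the first output coordinate yields a map $\tilde{L}$ of the same shape on the $s-1$ inputs $(x_1, x_3, \ldots, x_s)$, satisfying $|\tilde{L}(A_1, A_3, \ldots, A_s)| \geq k^{s-1} - (k-1)^{s-1}$ by induction. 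For each $\tilde{y}$ in the image of $\tilde{L}$, set $B_{\tilde{y}} = \{x_1 \in A_1 : \tilde{y} \in \tilde{L}(\{x_1\} \times A_3 \times \cdots \times A_s)\}$; the $L'$-fiber above $\tilde{y}$ is precisely $\alpha_1 B_{\tilde{y}} + A_2$, which by Cauchy--Davenport has size at least $|B_{\tilde{y}}| + k - 1$ (valid since $p \geq 2k-1$). Combining this with the double-counting identity $\sum_{\tilde{y}} |B_{\tilde{y}}| = k \cdot k^{s-2} = k^{s-1}$ yields $|L'| \geq k^{s-1} + (k-1)(k^{s-1} - (k-1)^{s-1}) = k^s - (k-1)^s$. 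This is where Alon's Combinatorial Nullstellensatz enters, through its role in the proof of Cauchy--Davenport.

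\emph{The factor $k^{m-s+1}$ and combining.} Let $\pi : \F_p^m \to \F_p^m / V$ be the quotient, of codomain dimension $m-s+1$. Since $L$ has rank $m$ and the columns of $L^{(1)}$ span $V$, the composition $\pi \circ L^{(2)} : \F_p^{n-s} \to \F_p^m/V$ has full rank $m-s+1$. Picking $m-s+1$ of its columns forming an invertible square submatrix and fixing the remaining inputs to arbitrary values in the corresponding $A_i$'s, the resulting affine restriction injects a product of $k$-element sets into $\pi(Y)$, whence $|\pi(Y)| \geq k^{m-s+1}$. Finally, $X \subseteq V$ gives $\pi(X+Y) = \pi(Y)$, and for each $\bar{y} \in \pi(Y)$ any lift $y_0 \in Y$ with $\pi(y_0) = \bar{y}$ satisfies $X + y_0 \subseteq (X+Y) \cap \pi^{-1}(\bar{y})$, so every fiber of $\pi|_{X+Y}$ has size at least $|X|$. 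Therefore $|L(A_1, \ldots, A_n)| = |X+Y| \geq |X| \cdot |\pi(Y)| \geq (k^s - (k-1)^s) \cdot k^{m-s+1}$. The main technical obstacle is the induction for $L'$ in the previous paragraph; the decomposition, the bound on $|\pi(Y)|$, and the product inequality are essentially linear-algebraic once the reduction is in place.
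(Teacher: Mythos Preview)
Your argument is correct, and the outer decomposition (split off the columns indexed by $\supp(v)$, then pass to the quotient $\F_p^m/V$ to pick up the factor $k^{m-s+1}$) matches the paper's reduction in spirit. The genuine difference is in how the core inequality $|X|\geq k^{s}-(k-1)^{s}$ is obtained. The paper proves the stronger bound of Theorem~\ref{thm:GenCD}, namely $|C|\geq \hat{k}\prod_i k_i-(\hat{k}-1)\prod_i(k_i-1)$ for arbitrary set sizes, by a direct Combinatorial Nullstellensatz argument: it interpolates a polynomial $f$ supported on a carefully designed monomial set $\Delta$, pushes it through the substitution $X_i\mapsto Y_i+Z$, and derives a contradiction via a tailored monomial ordering and the bijection $\phi$. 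Your route is instead an induction on $s$ that only uses the classical Cauchy--Davenport theorem as a black box, together with the double-counting identity $\sum_{\tilde y}|B_{\tilde y}|=k^{s-1}$; this is considerably more elementary and short. What the paper's approach buys is uniformity in the $k_i$ (giving the full Theorem~\ref{thm:main}) and a template that showcases the polynomial method; what your approach buys is a self-contained combinatorial proof of the equal-size statement that avoids the monomial-ordering machinery entirely.
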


Some remarks about this theorem:
\begin{itemize}
\item If $m = n -1$ and $p\geq 2k-1$, this lower bound is optimal for {\em every} linear map $L$.
See Lemma~\ref{lem:realtightness}.

If $m = n-1$ and $p < 2k-1$, this lower bound can be violated for {\em every} linear map $L$.

\item If our sets are taken to be subsets of $\mathbb R$ instead of $\F_p$, then 
for $m = n-1$, an identical lower bound holds for every linear map $L: \mathbb R^n \to \mathbb R^m$, 
and this lower bound is optimal for every $L$. As in the case of the Cauchy Davenport theorem, the 
lower bound also has an elementary proof using the natural order on $\mathbb R$.

\item If $m$ is small, and $k$ is large, then the lower bound is approximately $s \cdot k^{m}$.

\end{itemize}

Thus for the map $L^* : \F_p^3 \to \F_p^2$ mentioned above, if $p \geq 2k-1$,
then for every three sets $A_1, A_2, A_3$ with $|A_i| = k$,
we get that
$$|L^*(A_1, A_2, A_3)| \geq k^3 - (k-1)^3 = 3k^2 - 3k + 1,$$
and this is the best bound possible in term of $k$.

\subsection{Proof Outline}

Our proof is based on the Combinatorial Nullstellensatz~\cite{Alon},
generalizing one of the known proofs of the Cauchy-Davenport theorem.

The Combinatorial Nullstellensatz is an algebraic statement characterizing 
multivariate polynomials $Q(Y_1, \ldots, Y_n)$ which vanish on a given product set
$A_1 \times \ldots \times A_n$ as those polynomials which lie in a certain
explicitly given ideal. Let us recall the Combinatorial Nullstellensatz proof~\cite{ANR, Alon} of the
Cauchy-Davenport theorem. For given sets $A_1, A_2 \subseteq \F_p$, 
one wants to prove a lower bound on the size of the sumset $C = A_1+A_2$. Suppose
$C$ was small. The
key step of this proof is to consider the univariate polynomial
$T(X) \in \F_p[X]$, given by:
$$ T(X) = \prod_{c \in C} (X - c),$$
and the bivariate polynomial $Q(Y_1, Y_2) \in \F_p[Y_1, Y_2]$ given by:
$$Q(Y_1, Y_2) = T(Y_1 + Y_2) = \prod_{c \in C}  (Y_1 + Y_2 - c).$$
Since $C$ is small, $T$ and $Q$ are of low degree.
By design, the polynomial $Q$ vanishes on every point $(a_1, a_2) \in A_1 \times A_2$. Thus, by the Combinatorial Nullstellensatz, one concludes that
$Q(Y_1, Y_2)$ must lie in a certain ideal.
Then, inspecting monomials and using the upper-triangular criterion for
linear independence, one shows that no low-degree polynomial of the form $R(Y_1 + Y_2)$ (with $R(X) \in \F_p[X]$)
can lie this ideal. Since $Q(Y_1,Y_2) = T(Y_1 + Y_2)$, this a contradiction.

Our proof will follow the same high-level strategy, but with some important differences.
If $L(A_1, \ldots, A_n)$ is small, we will find a multivariate polynomial $Q$ of low ``complexity''
which vanishes on $A_1 \times A_2 \times \ldots \times A_n$, and thus by the Combinatorial Nullstellensatz, it must lie in a certain ideal $I$.
We then use some linear algebra arguments,
along with the low complexity of $Q$, to show that $Q$ cannot lie in $I$, thus deriving a contradiction.

There are two new technical ingredients that enter the proof.
The first ingredient appears in the construction of the polynomial $Q$. Since the range of $L$
is a high-dimensional vector space, there is no natural way of explictly
giving a polynomial vanishing on $C = L(A_1, \ldots, A_n)$. Instead, we will use a dimension argument 
to show the existence of a suitable polynomial $T(X_1, \ldots, X_m)$ vanishing on $C$,
and define $Q(Y_1, \ldots, Y_n)$ to be $T(L(Y_1, \ldots, Y_n))$.
The second ingredient appears in the linear algebra argument showing that $Q$ does not lie
in $I$. In order to make this argument, we will need $Q$ to have a very special
kind of monomial structure. This monomial structure is enforced when we choose $T$; it is
because of this requirement that we do not simply take $T$ to be a low-degree polynomial,
but instead choose $T$ from a larger space of polynomials satisfying some constraints (this
is what we have termed low complexity in the above description).

\paragraph{Organization of this paper} In the next section we give a formal statement of our main result.
In Section~\ref{sec:proof} we prove our main result. In Section~\ref{sec:example} we discuss limitations of
our methods to prove an optimal bound in the $m < n-1$ case. We conclude with some open problems.

\paragraph{Notation}
We use $[n]$ to denote the set $\{1, 2, \ldots, n\}$. 
For a vector $v \in \F^n$, we define its
support, denoted $\supp(v)$ to be the set of its nonzero coordinates,
namely $\{ i \in [n] \mid v_i \neq 0\}$.
We use $\deg(h)$ to denote the total degree of a polynomial $h$,
and $\deg_{Y}(h)$ to denote the degree in the variable $Y$ of the
polynomial $h$. We say a monomial $\mon$ {\em appears} in a polynomial
$h$ if in the standard representation of $h$ as a linear combination of
monomials, $\mon$ has a nonzero coefficient.


\section{The main result}

We first state our main theorem.
It gives, for every linear map $L: \F_p^n \to \F_p^{m}$,
a lower bound on the size of $L(A_1, \ldots, A_n)$, in terms of the sizes
of $A_1, \ldots, A_n$.


\begin{definition}
For a linear map $L: \F_p^n \to \F_p^m$, we define
the support-kernel of $L$ to be the set:
$$ \supker(L) = \{ S \subseteq [n] \mid  \exists v \in \ker(L), v \neq 0, \mbox{ with }
\supp(v) = S\}.$$
\end{definition}

\begin{theorem}
\label{thm:main}
Let $p$ be prime.
Let $n \geq 2$ be an integer. Let $m < n$.

Let $L: \F_p^n \to \F_p^m$ be a linear map of rank $m$.
Let $S$ be a minimal element of $\supker(L)$.
Let $S'$ be a maximal subset of $[n] \setminus S$
such that $2^{S' \cup S} \cap \supker(L) = \{ S \}$.

Let $1 \leq k_1, \ldots, k_n \leq p$.
Let $\kmax = \max_{i\in S} k_i$ and $\kmin = \min_{i \in S} k_i$.
Suppose $p \geq \kmax + \kmin - 1$.


Define 
$$\lambda =  \left(\left(\prod_{i\in S} k_i\right) - \left(\prod_{i\in S} (k_i-1)\right) \right) \cdot \left(\prod_{i \in S'} k_i \right).$$

Then for every $A_1, \ldots, A_n \subseteq \F_p$ with $|A_i| = k_i$,
 we have:
$$ | L(A_1, \ldots, A_n)| \geq \lambda.$$
\end{theorem}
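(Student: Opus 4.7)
The plan is to argue by contradiction, following the polynomial-method skeleton described in the proof outline. Suppose that $|C| < \lambda$, where $C = L(A_1, \ldots, A_n)$. The aim is to construct a polynomial $Q(Y_1, \ldots, Y_n) = T(L(Y_1, \ldots, Y_n))$, for some $T \in \F_p[X_1, \ldots, X_m]$, such that (a) $Q$ vanishes on $A_1 \times \cdots \times A_n$ (equivalently, $T$ vanishes on $C$), and (b) the top-degree homogeneous component of $Q$ has nonzero coefficient on the monomial $\mon^\ast := \prod_{i \in S \cup S'} Y_i^{k_i - 1}$. Since every exponent of $\mon^\ast$ is at most $k_i - 1 < k_i = |A_i|$ (with exponent $0$ for $i \notin S \cup S'$), property (b) will contradict the Combinatorial Nullstellensatz by producing a point of $A_1 \times \cdots \times A_n$ at which $Q$ does not vanish.

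Let $d := \sum_{i \in S \cup S'}(k_i - 1)$ be the total degree of $\mon^\ast$, and let $\mathcal{T} \subseteq \F_p[X_1, \ldots, X_m]$ be a suitably chosen space of polynomials whose ``complexity'' is controlled by $d$. A naive choice is the space of polynomials of total degree $\leq d$, but following the proof outline I expect $\mathcal{T}$ to be an enlargement of this space in which certain higher-degree monomials are permitted, subject to linear constraints that annihilate their contribution to the coefficient of $\mon^\ast$ in $T(L(Y))$. The two requirements on $\mathcal{T}$ are: (i) $\dim \mathcal{T} > |C|$, so that by dimension counting a nonzero $T \in \mathcal{T}$ exists with $T|_C = 0$; and (ii) the linear functional $\phi_0 : \mathcal{T} \to \F_p$ sending $T$ to the coefficient of $\mon^\ast$ in $T(L(Y))$ does \emph{not} lie in the span of the evaluation functionals $\{\mathsf{ev}_c : c \in C\}$.

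The combinatorial roles of $S$ and $S'$ drive the computation of $\phi_0(X^e)$ on monomials $X^e = X_1^{e_1} \cdots X_m^{e_m}$. Minimality of $S$ in $\supker(L)$ makes the columns $\{L(e_i) : i \in S\}$ linearly dependent with a unique (up to scale) dependency whose support is exactly $S$; maximality of $S'$ ensures that $\{L(e_i) : i \in S \cup S'\}$ has rank $|S \cup S'| - 1$ with no further dependency supported inside $S \cup S'$. This pins down which monomials $X^e$ expand to include $\mon^\ast$ with nonzero coefficient, and the count and arithmetic of such ``effective'' $X^e$ are precisely what the number $\lambda$ encodes. The binomial coefficients arising in these expansions are nonzero modulo $p$ by the hypothesis $p \geq \kmax + \kmin - 1$.

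The main obstacle, and the heart of the argument, is requirement (ii): showing that $\phi_0$ cannot be written as a linear combination of fewer than $\lambda$ evaluation functionals on $\mathcal{T}$. In the $m = 1$ (classical Cauchy-Davenport) case this admits the explicit witness $T(X) = \prod_{c \in C}(X - c) \cdot X^{d - |C|}$, which lies in $\mathcal{T}_0 := \{T \in \mathcal{T} : T|_C = 0\}$ and has $\phi_0(T) \neq 0$. For general $L$ no closed-form vanishing polynomial on $C \subseteq \F_p^m$ is available, so the step must proceed by producing a basis of $\mathcal{T}$ adapted both to $\phi_0$ and to the support structure $S \cup S'$, and then using an upper-triangular linear-independence argument between this basis and the evaluation functionals. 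The hypothesis $|C| < \lambda$ is the precise budget that makes this linear-independence argument succeed.
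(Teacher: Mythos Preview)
Your high-level framework is exactly the paper's: find $T$ vanishing on $C$ by dimension counting over a carefully chosen space $\mathcal{T}$, set $Q(Y)=T(L(Y))$, and derive a contradiction with the Combinatorial Nullstellensatz by exhibiting a legal top-degree monomial in $Q$. Your reformulation of the key obstruction as ``$\phi_0\notin\mathrm{span}\{\mathsf{ev}_c:c\in C\}$'' is correct and equivalent to what the paper proves.

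Where your proposal falls short is precisely where you yourself flag the ``main obstacle'': you never specify $\mathcal{T}$ or carry out the triangular argument, and working directly with a general $L$ makes both tasks substantially harder than necessary. The paper does \emph{not} attack general $L$ head-on. Instead it first (i) fixes the coordinates outside $S\cup S'$ to reduce to an $(m{+}1)\to m$ map, and then (ii) uses row operations, column scaling, and column permutation (all $\mu$-preserving, Lemma~\ref{lem:transform}) to bring the matrix to the canonical form $[\,\mathbf{1}_S\,|\,I_m\,]$, where the first column is the indicator of $S$. After this normalization, $L(Y)=(Y_1+Z,\ldots,Y_{|S|-1}+Z,Y_{|S|},\ldots,Y_m)$, and everything becomes explicit: the space $\mathcal{T}$ is the $\F_p$-span of a concrete set $\Delta$ of exactly $\lambda$ monomials in $X_1,\ldots,X_m$, and there is an explicit degree-preserving bijection $\phi:\Gamma\to\Delta$ to a set $\Gamma$ of legal monomials in $Y_1,\ldots,Y_m,Z$. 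The ordering on $\Delta$ (increasing total degree, then decreasing $Z$-degree of $\phi^{-1}$) makes the map $\K\mapsto\overline{\K(Y_1+Z,\ldots,Y_m+Z)}$ upper-triangular with nonzero diagonal; the hypothesis $p\ge \kmax+\kmin-1$ enters only to ensure that the single binomial coefficient $\binom{\ell}{k_j-1}$ on the diagonal is nonzero mod $p$. Note in particular that the argument does not isolate a single distinguished monomial $\mon^\ast$ as you propose; rather, each $\K\in\Delta$ gets its own witness $\mon_\K=\phi^{-1}(\K)$, and the whole family is shown to be linearly independent mod $I$.

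In short: your plan is sound but incomplete, and the missing ingredient is the reduction to canonical form, without which the choice of $\mathcal{T}$ and the upper-triangular step you allude to remain unspecified.
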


Taking all the $k_i$ to equal $k$, and observing that $S'$ has size $m+1-s$, we get the theorem stated in the introduction.

The following lemma shows that when $m = n-1$, and $|A_1| = |A_2| = \ldots = |A_n|$,
then the above lower bound is the best possible.
\begin{lemma}
\label{lem:realtightness}
Let $p$ be prime.
Let $n \geq 2$ be an integer. Let $m = n-1$.

Let $L: \F_p^n \to \F_p^m$ be a linear map of rank $m$.
Let $S \subseteq [n]$ be the unique element of $\supker(L)$.
Let $S' = [n] \setminus S$, and observe that $2^{S' \cup S} \cap \supker(L) = S$.

Let $k_1 = k_2 = \ldots =  k_n = k$.


Define 
$$\lambda =  \left(\left(\prod_{i\in S} k_i\right) - \left(\prod_{i\in S} (k_i-1)\right) \right) \cdot \left(\prod_{i \in S'} k_i\right).$$

Then:
\begin{enumerate}
\item If $p \geq 2k-1$,
there exist $A_1, \ldots, A_n \subseteq \F_p$ with $|A_i| = k_i$,
 such that:
$$ | L(A_1, \ldots, A_n)| = \lambda.$$
\item If $p < 2k-1$,
there exist $A_1, \ldots, A_n \subseteq \F_p$ with $|A_i| = k_i$,
 such that:
$$ | L(A_1, \ldots, A_n)| < \lambda.$$
\end{enumerate}
\end{lemma}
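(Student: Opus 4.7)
The plan is to exhibit explicit $A_i$'s realizing equality in Part 1 and strict inequality in Part 2. Since $\supker(L) = \{S\}$ means $\ker(L)$ is one-dimensional, spanned by some $v$ with $\supp(v) = S$, I would first rescale each coordinate via a diagonal change of variables (mapping $e_i \mapsto v_i^{-1} e_i$ for $i \in S$, identity on $S'$) to convert $v$ into $\mathbf{1}_S := \sum_{i \in S} e_i$. This replaces each $A_i$ by $v_i A_i$ (of the same size) while preserving $|L(A_1, \ldots, A_n)|$, so without loss of generality $\ker L = \langle \mathbf{1}_S \rangle$. The candidate sets are $A_1 = \cdots = A_n = \{0, 1, \ldots, k-1\} \subseteq \F_p$, which have size $k$ because $k \leq p$.

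Next I would reduce to an orbit count. Because $L$ has rank $n-1$ with $\ker L = \langle \mathbf{1}_S \rangle$, the cardinality $|L(A_1, \ldots, A_n)|$ equals the number of $\F_p$-orbits in $A^n$ under translation by $\mathbf{1}_S$. Since this action fixes the $S'$-coordinates, orbits split as a product, giving
\[
|L(A_1, \ldots, A_n)| \;=\; N \cdot k^{n-s},
\]
where $N$ denotes the number of $\F_p$-orbits in $A^s$ under the diagonal translation $a \mapsto a + t\mathbf{1}_s$. The proof reduces to computing $N$ in the two regimes.

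For Part 1 ($p \geq 2k - 1$) I would show that no two integer orbits of $A^s$ collapse mod $p$: if $a, a' \in \{0, \ldots, k-1\}^s$ satisfy $a' = a + t\mathbf{1}_s$ in $\F_p^s$, the coordinate constraints force $t \in \{-(k-1), \ldots, k-1\}$, and these $2k-1$ values are distinct residues mod $p$. Hence the $\F_p$-orbits coincide with the integer ones, each represented uniquely by its element with $\min_i a_i = 0$. Counting such representatives yields $N = k^s - (k-1)^s$ and $|L(A_1, \ldots, A_n)| = \lambda$.

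For Part 2 ($p \leq 2k - 2$, implicitly $s \geq 2$) I would exhibit two distinct integer-canonical representatives that become identified mod $p$. Take $u = (0, k-1, 0, \ldots, 0)$ and $u' = (p-k+1, 0, p-k+1, \ldots, p-k+1)$ in $A^s$; the containment $u' \in A^s$ uses $p \leq 2k-2$, both tuples satisfy $\min_i = 0$, yet $u' \equiv u + (p-k+1)\mathbf{1}_s \pmod{p}$, so they share an $\F_p$-orbit. This gives $N \leq k^s - (k-1)^s - 1$, hence $|L(A_1, \ldots, A_n)| < \lambda$. The main subtlety is producing this explicit collision; once the right pair is guessed, the inequality $p - k + 1 \leq k - 1$ and the congruence $-(k-1) \equiv p - k + 1 \pmod p$ handle the rest.
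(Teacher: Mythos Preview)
Your argument is correct and follows essentially the same approach as the paper: both normalize the kernel (you via a diagonal rescaling to $\langle \mathbf{1}_S\rangle$, the paper via row/column operations to the matrix $\hat{L}$), both take $A_i=\{0,\ldots,k-1\}$, and both count by tracking how the diagonal shifts $a\mapsto a+j\mathbf{1}_s$ overlap inside the box $\{0,\ldots,k-1\}^s$. The only cosmetic difference is packaging: you phrase the count as the number of $\F_p$-orbits and pick canonical representatives with $\min_i a_i=0$, whereas the paper writes the image as $\bigcup_j C_j$ and computes $|C_j\setminus C_{j-1}|$; these are the same computation, and your specific wraparound collision in Part~2 (the pair $u,u'$) is a valid alternative to the paper's choice of $(0,\ldots,0)\in C_0\cap C_{p-k+1}$. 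Your parenthetical ``implicitly $s\geq 2$'' in Part~2 matches an implicit assumption in the paper's own proof as well.
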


\section{Proof of the main theorem}
\label{sec:proof}
For a linear map $L : \F_p^n \to \F_p^m$ and integers $k_1, \ldots, k_n$,
define:
$$ \mu(L, k_1, \ldots, k_n) \defeq \min_{\substack{A_1, A_2, \ldots, A_n \subseteq \F_p \\ |A_i| = k_i}} |L(A_1, \ldots, A_n)|.$$

The proof of the main theorem, Theorem~\ref{thm:main} has two steps.
The first step performs elementary operations on the linear map $L$
to bring it into a simple form, while preserving the value of $\mu(L, k_1, \ldots, k_n)$.   
The second step applies the polynomial method to give a lower bound on $\mu(L, k_1, \ldots, k_n)$
for these simple $L$.  
The allowable operations to simplify the linear map are listed in Lemma~\ref{lem:transform} 
and the lower bound for the simpler map is the subject of Theorem~\ref{thm:GenCD}.

\begin{lemma}
\label{lem:transform}
Let $L: \F_p^n \to \F_p^m$ be a linear map, and let $1 \leq k_1, \ldots, k_n \leq p$.
\begin{enumerate}
\item
Let $L' : \F_p^m \to \F_p^m$ be a full rank linear transformation.
Then $\mu(L, k_1, \ldots, k_n) = \mu(L' \circ L, k_1, \ldots, k_n)$.

\item 
Let $L'' : \F_p^n \to \F_p^n$ be a linear map whose matrix is a
diagonal matrix with all diagonal entries nonzero.
Then $\mu(L , k_1, \ldots, k_n) = \mu(L\circ L'', k_1, \ldots, k_n)$.

\item
Let $\pi: [n] \to [n]$ be a permutation.
Let $L_\pi: \F_p^n \to \F_p^n$ be the linear map that permutes coordinates according
to $\pi$ (i.e.; $L_\pi(e_i) = e_{\pi(i)}$).
Then $\mu(L , k_1, \ldots, k_n) = \mu(L\circ L_{\pi}, k_{\pi^{-1}(1)}, \ldots, k_{\pi^{-1}(n)})$.

\end{enumerate}
\end{lemma}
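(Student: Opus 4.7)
The lemma packages three independent elementary observations about the invariance of $\mu$ under natural operations on $L$, and I would prove each one by exhibiting a cardinality-preserving correspondence between the admissible tuples of sets on the two sides. None of the three parts needs any ideas beyond the definition of $\mu$, so my plan is to handle them in turn, being careful only about the indexing in part 3.

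For part 1, since the full-rank map $L'$ is a bijection of $\F_p^m$, I would observe that $|(L' \circ L)(A_1, \ldots, A_n)| = |L'(L(A_1 \times \cdots \times A_n))| = |L(A_1, \ldots, A_n)|$ for every tuple $(A_1, \ldots, A_n)$, because applying a bijection to a subset of $\F_p^m$ does not change its size. The family of admissible tuples on both sides is identical, so taking the minimum yields the equality of the two values of $\mu$.

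For part 2, I would write $L''$ as the coordinate-wise dilation $(x_1, \ldots, x_n) \mapsto (d_1 x_1, \ldots, d_n x_n)$ with each $d_i$ nonzero, so that $(L \circ L'')(A_1, \ldots, A_n) = L(d_1 A_1, \ldots, d_n A_n)$. Since $a \mapsto d_i a$ is a bijection of $\F_p$, each $d_i A_i$ has the same size as $A_i$, and the map $A_i \mapsto d_i A_i$ is itself a bijection on the family of $k_i$-subsets of $\F_p$. Consequently the same collection of image sizes appears on both sides, and the two minima coincide.

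For part 3, I would compute directly from $L_\pi(e_i) = e_{\pi(i)}$ that $(L \circ L_\pi)(C_1, \ldots, C_n)$ equals $L$ applied to the product $C_1 \times \cdots \times C_n$ after a fixed permutation of the slots. Reading off which $k_i$ sits in which slot is then a bookkeeping exercise in $\pi$ and $\pi^{-1}$. This last index-tracking is the only step I would expect to be a minor source of friction, and I would verify it against a small example such as a two-element swap to confirm that the direction of the relabeling matches the statement. Beyond this, the lemma is essentially definitional; its importance lies not in its depth but in the flexibility it buys, namely the ability to perform row operations on $L$, rescale coordinates, and permute them freely, which is precisely the freedom needed to bring $L$ into a canonical form before invoking Theorem~\ref{thm:GenCD}.
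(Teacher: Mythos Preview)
Your proposal is correct and mirrors the paper's own proof almost exactly: in each part both you and the paper argue that the relevant transformation induces a size-preserving bijection on the admissible tuples (or on their images), and then take the minimum. The only cosmetic difference is that in part~2 the paper verifies the two inequalities separately via $L''$ and its inverse, whereas you phrase it as a single bijection on the family of $k_i$-subsets; and, as you correctly anticipated, part~3 is purely a $\pi$-versus-$\pi^{-1}$ bookkeeping check.
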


\begin{proof}
\begin{enumerate}
\item
$L'$ is an isomorphism, so 
$$|L'\circ L(A_1, \ldots, A_n)| = |L(A_1, \ldots, A_n)|.$$
Taking the minimum over the choices of the sets $A_i, i\in [n]$, we get $\mu(L, k_1, \ldots, k_n) = \mu(L' \circ L, k_1, \ldots, k_n)$.

\item
Applying $L''$ to $(A_1,\ldots,A_n)$ simply scales the set $A_i$ by a factor of $L''_{i,i}$.  In particular, $L''$ preserves the sizes 
of the sets.  So we have:
$$|L\circ L''(A_1, \ldots, A_n)| = |L(L''_{1,1}A_1, \ldots, L''_{n,n}A_n)| \geq \mu(L, k_1, \ldots, k_n).$$
Taking the minimum over the choices of the sets $A_i, i\in [n]$, we get $\mu(L\circ L'', k_1, \ldots, k_n) \geq \mu(L' \circ L, k_1, \ldots, k_n)$.  

For the other direction, observe that any scaling is reversible by an inverse scaling:
$$|L(A_1, \ldots, A_n)| = |L\circ L''(\frac{1}{L''_{1,1}}A_1, \ldots, \frac{1}{L''_{n,n}}A_n)| \geq \mu(L\circ L'', k_1, \ldots, k_n).$$
Taking the minimum over the $A_i, i\in [n]$ gives the reverse inequality.

\item
$L_\pi$ permutes the indices of the sets, and so permutes the sizes of the sets.  Taking this into account, the size of the image 
should remain the same:
\begin{eqnarray*}
|L\circ L_\pi(A_{\pi^{-1}(1)}, \ldots, A_{\pi^{-1}(n)})| &=& |L(A_1, \ldots, A_n)| \geq \mu(L, k_1, \ldots, k_n),\\
|L(A_1, \ldots, A_n)| &=& |L\circ L_\pi(A_{\pi^{-1}(1)}, \ldots, A_{\pi^{-1}(n)})| \\
&\geq & \mu(L\circ L_{\pi}, k_{\pi^{-1}(1)}, \ldots, k_{\pi^{-1}(n)}).
\end{eqnarray*}
Taking the minimum over the $A_i, i\in [n]$ gives both directions of the inequality.

\end{enumerate}
\end{proof}


\begin{theorem}
\label{thm:GenCD}
Let $p$ be prime. Let $m \geq 1$ be an integer.

Let $U_1, U_2, \ldots, U_{m}, V \subseteq F_p$ 
be subsets of size $|U_i| = k_i$ for $1\leq i\leq m$, and $|V| = \hat{k}$.
Suppose $p \geq \hat{k} + k_i-1$ for each $i$.

Let
$$C = \{(u_1+v,u_2+v,\ldots,u_m+v)|u_i\in U_i \text{ for each $i$}, v \in V \}.$$  

Then
$$|C| \geq \hat{k} \cdot \prodd_{i=1}^{m}{k_i}  - (\hat{k}-1) \cdot \prodd_{i=1}^{m}(k_i-1).$$
\end{theorem}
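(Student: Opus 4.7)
The plan is to induct on $m$, using the classical Cauchy-Davenport theorem as the base case. For $m = 1$, we have $C = U_1 + V$, and Cauchy-Davenport (applicable because $p \geq k_1 + \hat{k} - 1$) gives $|C| \geq k_1 + \hat{k} - 1 = \hat{k} k_1 - (\hat{k} - 1)(k_1 - 1)$, matching the claimed bound.

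For the inductive step from $m-1$ to $m$, I would slice $C$ along its first coordinate. Let $D_1 = U_1 + V$; by Cauchy-Davenport, $|D_1| \geq k_1 + \hat{k} - 1$. For each $c_1 \in D_1$, define $V_{c_1} = V \cap (c_1 - U_1)$, so that $|V_{c_1}|$ counts the pairs $(u_1, v) \in U_1 \times V$ with $u_1 + v = c_1$. The fiber of $C$ over $c_1$, projected onto its last $m-1$ coordinates, is
$$S_{c_1} = \{(u_2 + v, \ldots, u_m + v) : u_i \in U_i \text{ for } i \geq 2, \ v \in V_{c_1}\},$$
which is precisely the $(m-1)$-dimensional instance of the theorem with sets $U_2, \ldots, U_m$ and with $V_{c_1}$ playing the role of $V$. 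Because $c_1 \in D_1$ ensures $V_{c_1} \neq \emptyset$ and $|V_{c_1}| \leq \hat{k}$, the hypothesis on $p$ carries over and the inductive hypothesis yields
$$|S_{c_1}| \geq |V_{c_1}| \prod_{i=2}^{m} k_i - (|V_{c_1}| - 1) \prod_{i=2}^{m}(k_i - 1).$$

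To finish, I would sum $|C| = \sum_{c_1 \in D_1} |S_{c_1}|$, using the double-counting identity $\sum_{c_1 \in D_1} |V_{c_1}| = |U_1| \cdot |V| = k_1 \hat{k}$ together with $|D_1| \geq k_1 + \hat{k} - 1$. After collecting terms one obtains
$$|C| \geq k_1 \hat{k} \prod_{i \geq 2} k_i - \bigl(k_1 \hat{k} - |D_1|\bigr) \prod_{i \geq 2}(k_i - 1) \geq \hat{k} \prod_{i=1}^m k_i - (\hat{k} - 1) \prod_{i=1}^m(k_i - 1),$$
where the last inequality uses the identity $k_1 \hat{k} - (k_1 + \hat{k} - 1) = (k_1 - 1)(\hat{k} - 1)$. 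I do not anticipate any essential obstacle: the key structural insight is that the slack in Cauchy-Davenport's lower bound on $|D_1|$ exactly accounts for the gap between the naive term-by-term estimate and the target $\lambda$, and the remaining bookkeeping is routine. (This is an alternative to the polynomial-method approach flagged in the paper's introduction; it has the advantage of being elementary modulo Cauchy-Davenport, but the disadvantage of invoking Cauchy-Davenport as a black box rather than reproving it along the way.)
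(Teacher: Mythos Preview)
Your inductive argument is correct and complete: the fiber decomposition along the first coordinate, the application of the induction hypothesis to each slice $S_{c_1}$, the double-count $\sum_{c_1} |V_{c_1}| = k_1\hat{k}$, and the final use of $|D_1| \geq k_1 + \hat{k} - 1$ to convert $k_1\hat{k} - |D_1| \leq (k_1-1)(\hat{k}-1)$ all check out. The hypothesis on $p$ carries over to each sub-instance because $|V_{c_1}| \leq \hat{k}$.

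However, this is a genuinely different argument from the paper's. The paper gives a direct, non-inductive proof via the Combinatorial Nullstellensatz: it constructs a space $\Delta$ of $|\Gamma| = \hat{k}\prod k_i - (\hat{k}-1)\prod(k_i-1)$ monomials in $\F_p[X_1,\ldots,X_m]$, shows that if $|C|$ were smaller one could interpolate a nonzero $f$ supported on $\Delta$ vanishing on $C$, substitutes $X_i \mapsto Y_i + Z$ to get a polynomial in the Nullstellensatz ideal, and then derives a contradiction by a delicate triangular ordering of $\Delta$ showing that some leading monomial must survive the canonical reduction. Your approach is far shorter and more elementary, but it treats Cauchy--Davenport as a black box (invoked both in the base case and at every level of the induction to bound $|D_1|$), whereas the paper's method is self-contained and in fact re-derives Cauchy--Davenport as the $m=1$ special case. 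The paper's polynomial framework is also what ties the result to the broader machinery used for the general linear map $L$ in Theorem~\ref{thm:main}; your induction, while perfectly adequate for this specific ``diagonal-shift'' map, does not obviously suggest how to handle more general $L$ without first reducing to this form.
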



\subsection{Preliminaries: multivariate polynomials and Combinatorial Nullstellensatz}

In preparation for our proof of Theorem~\ref{thm:GenCD}, we recall the statement
of the Combinatorial Nullstellensatz, along with some important facts about
reducing multivariate polynomials modulo ideals of the kind that arise in the Combinatorial Nullstellensatz.

\begin{lemma}[Combinatorial Nullstellensatz~\cite{Alon}]
Let $\F$ be a field, and let $A_1, \ldots, A_n \subseteq \F$.
For $i \in [n]$, let $P_i(T)\in \F[T]$ be given by
$P_i(T) = \prodd_{\alpha \in A_i} (T - \alpha)$.

Let $h(Y_1, \ldots, Y_n) \in \F[Y_1, \ldots, Y_n]$.
Then $h(Y_1, \ldots, Y_n)$ vanishes on $A_1 \times \ldots \times A_n$
if and only if $h$ lies in the ideal generated by $P_1(Y_1), P_2(Y_2), \ldots, P_n(Y_n)$.
\end{lemma}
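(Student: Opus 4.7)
The plan is to prove the two directions separately. The ``if'' direction is immediate: every generator $P_i(Y_i)$ vanishes on the product set $A_1 \times \cdots \times A_n$ since $P_i(a_i) = 0$ for all $a_i \in A_i$, and hence every element of the ideal $I = (P_1(Y_1), \ldots, P_n(Y_n))$ vanishes there too. All the work lies in the ``only if'' direction, for which my strategy is to (a) produce a canonical representative $\tilde{h}$ of $h$ modulo $I$ with small individual degrees, and (b) argue that a polynomial with small individual degrees that vanishes on a product set must be identically zero.

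For step (a), write $P_i(Y_i) = Y_i^{k_i} - R_i(Y_i)$ where $k_i = |A_i|$ and $\deg(R_i) < k_i$. I would iteratively reduce: whenever a monomial of $h$ contains $Y_i^{e}$ with $e \geq k_i$, substitute $Y_i^{e} = Y_i^{e-k_i} R_i(Y_i) \pmod{I}$. Each such rewrite strictly decreases the $Y_i$-degree of that monomial, so the process terminates and yields $\tilde{h} \in \F[Y_1,\ldots,Y_n]$ with $h - \tilde{h} \in I$ and $\deg_{Y_i}(\tilde{h}) < k_i$ for every $i$. Since $h$ vanishes on $A_1 \times \cdots \times A_n$ by hypothesis and every element of $I$ does too, $\tilde{h}$ also vanishes on $A_1 \times \cdots \times A_n$.

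For step (b), I would prove the following claim by induction on $n$: if $g(Y_1,\ldots,Y_n) \in \F[Y_1,\ldots,Y_n]$ satisfies $\deg_{Y_i}(g) < k_i$ for each $i$ and vanishes on $A_1 \times \cdots \times A_n$, then $g = 0$. The base case $n=1$ is clear, since a univariate polynomial of degree less than $k_1$ with $k_1$ roots is zero. For the inductive step, write $g = \sum_{j=0}^{k_n - 1} g_j(Y_1,\ldots,Y_{n-1})\, Y_n^j$. For each fixed tuple $(a_1,\ldots,a_{n-1}) \in A_1 \times \cdots \times A_{n-1}$, the univariate polynomial $g(a_1,\ldots,a_{n-1},Y_n)$ has degree less than $k_n$ and vanishes on $A_n$, hence is zero, so every coefficient $g_j(a_1,\ldots,a_{n-1})$ is zero. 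Thus each $g_j$ vanishes on $A_1 \times \cdots \times A_{n-1}$ and still satisfies the degree bounds, so the induction hypothesis forces $g_j = 0$ for all $j$, giving $g = 0$. Applying this to $\tilde{h}$ concludes that $\tilde{h} = 0$ and therefore $h \in I$.

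The main obstacle, and the only genuinely nontrivial ingredient, is the inductive vanishing claim in step (b); the reduction in step (a) is purely formal manipulation. The crux is the idea of viewing the multivariate polynomial as a univariate polynomial in one variable whose coefficients lie in a polynomial ring of one fewer variable, so that the one-dimensional fact (a degree-$<k$ polynomial with $k$ roots is zero) can be bootstrapped into the multivariate statement. Once these two steps are in place, combining them yields both directions of the equivalence.
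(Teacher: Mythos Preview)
The paper does not actually prove this lemma; it is quoted from Alon~\cite{Alon} and stated without proof, with the surrounding discussion only describing the canonical reduction modulo $I$ (essentially your step~(a)) for later use. Your argument is correct and is precisely the standard proof: the easy ``if'' direction, a reduction to a representative $\tilde h$ with $\deg_{Y_i}(\tilde h) < |A_i|$, and then the induction-on-$n$ vanishing lemma showing $\tilde h = 0$. This matches Alon's original proof, so there is nothing to compare against in the present paper beyond noting that your reduction step coincides with the canonical-reduction discussion that immediately follows the lemma statement.
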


Now let $P_1(T), \ldots, P_n(T) \in \F[T]$ be polynomials, with
$\deg(P_i) = k_i$.
Let $I$ be the ideal generated by $\langle P_i(Y_i)\rangle_{ i \in [n]}$.

Given this setup, we now discuss the operation of {\em reducing a polynomial mod $I$}. A monomial $\prod_{i = 1}^n Y_i^{e_i}$ is called
{\em legal} for $I$ if $e_i < k_i$ for each $i \in [n]$.
Given a polynomial $h$, there is a canonical reduction mod $I$, denoted $\overline{h}$, with the property that $h \equiv \overline{h} \mod I$,
and that every monomial appearing in the expansion of $\overline{h}$ is
legal for $I$ (equivalently, for each $i$ we have
$\deg_{Y_i}(\overline{h}) < k_i$).
This canonical reduction can be obtained as follows.
Reducing a polynomial mod $P_i(Y_i) = Y_i^{k_i} - \sum_{j=0}^{k_i-1} a_j Y_i^j$
is simply the act of repeatedly replacing every occurrence of $Y_i^{k_i}$
with $\sum_{j=0}^{k_i-1} a_j Y_i^{j}$, until the $Y_i$ degree is less
than $k_i$. Reducing the polynomial $h$ mod $P_i(Y_i)$ in succession
for each $i \in [n]$ gives the canonical reduction $\overline{h}$.

Here are some important (and easy to verify) points about canonical reduction:
\begin{enumerate}
\item $h \in I$ if and only if $\overline{h} = 0$.
\item The map $h \mapsto \overline{h}$ is $\F$-linear.
\end{enumerate}

It will be important for us to understand the degrees of the monomials in $\overline{h}$.  
Let $\mon = \prodd_{i=1}^{n} Y_i^{e_i}$ be a monomial, and consider its reduction $\overline{\mon}$ mod $I$.  
If $e_i<k_i$ for each $i\in [n]$, then we have $\overline{\mon} = \mon$.  Furthermore, if there is some $e_i\geq k_i$, then 
$\deg\overline{\mon}<\deg(\mon)$.  This is because the act of replacing $Y_i^{k_i}$ with a lower degree polynomial in $Y_i$ strictly decreases the degree.  
Combining these two facts, we get the following fact.

\begin{fact}
\label{fact}
With notation as above, let $h(Y_1, \ldots, Y_n) \in \F[Y_1, \ldots, Y_n]$.
Suppose $\mon$ is a monomial that (1) appears in $h$, (2) has $\deg(\mon) = \deg(h)$, and (3) is legal for $I$.

Then $\mon$ appears in the canonical reduction $\overline{h}$. 
\end{fact}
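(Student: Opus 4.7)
The plan is to exploit the $\F$-linearity of the reduction map $h \mapsto \overline{h}$, together with the two degree observations recorded in the paragraph immediately before the fact. Let $c \in \F$ denote the coefficient of $\mon$ in the standard monomial expansion of $h$; by hypothesis $c \neq 0$. I would write $h = c \cdot \mon + \sum_j c_j \mon_j$, where the $\mon_j$ are the other (distinct) monomials appearing in $h$. By linearity, $\overline{h} = c \cdot \overline{\mon} + \sum_j c_j \overline{\mon_j}$, and since $\mon$ is legal the earlier observation gives $\overline{\mon} = \mon$; so it suffices to show that no other $\overline{\mon_j}$ contributes to the coefficient of $\mon$ in $\overline{h}$.

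To check this I would split the $\mon_j$ into two classes. If $\mon_j$ is itself legal, then $\overline{\mon_j} = \mon_j$, and since distinct monomials appear in the standard expansion, $\mon_j \neq \mon$; hence the coefficient of $\mon$ in $\overline{\mon_j}$ is zero. If on the other hand $\mon_j$ is illegal (some exponent is at least the corresponding $k_i$), the strict-degree-decrease observation gives $\deg(\overline{\mon_j}) < \deg(\mon_j) \leq \deg(h) = \deg(\mon)$; so every monomial of $\overline{\mon_j}$ has degree strictly less than $\deg(\mon)$, and in particular $\mon$ itself cannot appear in $\overline{\mon_j}$.

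Combining the two cases, the coefficient of $\mon$ in $\overline{h}$ is exactly $c$, which is nonzero, so $\mon$ appears in $\overline{h}$. I do not anticipate any real obstacle: the argument amounts to routine bookkeeping built directly on the two facts (linearity of the canonical reduction, and strict degree decrease when reducing an illegal monomial) already established just before the statement.
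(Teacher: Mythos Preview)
Your proposal is correct and follows essentially the same approach as the paper: linearity of reduction, $\overline{\mon}=\mon$ for legal $\mon$, and strict degree drop for illegal monomials combine to show the coefficient of $\mon$ survives. If anything, your case split is slightly more careful than the paper's one-line justification, which asserts that ``the canonical reductions of the other monomials will have smaller degree than $\mon$'' without explicitly separating out the (harmless) case of other legal top-degree monomials.
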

This is because $\overline{\mon} = \mon$, and 
the canonical reductions of the other monomials will have smaller degree than $\mon$, and will therefore leave $\mon$ untouched.

Very similar considerations give us the following related fact.
\begin{fact}
\label{fact2}
With notation as above, let $h(Y_1, \ldots, Y_n) \in \F[Y_1, \ldots, Y_n]$.
Suppose $\mon$ is a monomial that (1) appears in $\overline{h}$, (2) has $\deg(\mon) = \deg(h)$, and (3) is legal for $I$.

Then $\mon$ appears in $h$. 
\end{fact}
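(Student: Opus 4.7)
The plan is to run essentially the same argument that established Fact~\ref{fact}, by tracking top-degree coefficients through the canonical reduction map. I would first write $h = \sum_{\mu} c_{\mu}\, \mu$ as a sum over monomials, and use the $\F$-linearity of $h \mapsto \overline{h}$ to get $\overline{h} = \sum_{\mu} c_{\mu}\, \overline{\mu}$. I would then split this sum according to whether each $\mu$ is legal for $I$ or not, invoking the two structural observations recorded just before Fact~\ref{fact}: if $\mu$ is legal then $\overline{\mu} = \mu$, and if $\mu$ is not legal then $\deg(\overline{\mu}) < \deg(\mu)$.

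Next, I would compute the coefficient of the target monomial $\mon$ in $\overline{h}$ by examining the contribution of each term $c_{\mu}\, \overline{\mu}$. For any non-legal $\mu$ appearing in $h$, one has $\deg(\overline{\mu}) < \deg(\mu) \le \deg(h) = \deg(\mon)$, so $\overline{\mu}$ contains no monomial of degree $\deg(\mon)$ whatsoever, and in particular contributes $0$ to the coefficient of $\mon$. For any legal $\mu$, $\overline{\mu} = \mu$, so this term contributes to the coefficient of $\mon$ only in the single case $\mu = \mon$. Combining, the coefficient of $\mon$ in $\overline{h}$ equals exactly $c_{\mon}$. Since $\mon$ is assumed to appear in $\overline{h}$, this forces $c_{\mon} \neq 0$, so $\mon$ appears in $h$.

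There is no real obstacle here; the statement is essentially dual to Fact~\ref{fact} and is forced by the same two ingredients — $\F$-linearity of canonical reduction, and the fact that reduction strictly decreases degree on illegal monomials while fixing legal ones. The one small point worth being careful about is that one must rule out both types of interference at degree $\deg(\mon)$: interference from illegal monomials (handled by the strict degree drop) and interference from other legal monomials of the same degree (handled because distinct legal monomials reduce to themselves and hence never collide with $\mon$).
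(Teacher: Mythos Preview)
Your proposal is correct and is exactly the argument the paper has in mind: the paper does not spell out a proof of Fact~\ref{fact2} at all, merely noting that ``very similar considerations'' to Fact~\ref{fact} yield it, and those considerations are precisely the linearity of canonical reduction together with the legal/illegal dichotomy you invoke. Your write-up is a clean unpacking of that one-line remark.
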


\subsection{Correlated sumsets and the polynomial method}

We now prove Theorem~\ref{thm:GenCD}.

\begin{proof}
We begin by defining some sets of monomials which will be useful to us.

In the polynomial ring $\F_p[Y_1, \ldots, Y_m, Z]$, consider the 
following set of monomials:

\begin{multline*}
\Gamma = \{Y_1^{e_1}Y_2^{e_2}\cdots Y_m^{e_m} Z^e|0\leq e_i\leq k_i-1 \text{ for each $i$, and } 0 \leq e \leq \hat{k}-1,  \\
 \text{and $e > 0$} \Rightarrow e_i = k_i-1 \text{ for some } i\}.
\end{multline*}

We will also consider the polynomial ring $\F_p[X_1, \ldots, X_m]$.
To each monomial $\mon(Y_1, \ldots, Y_m, Z) \in \Gamma$, we associate 
a monomial $\phi(\mon) \in \F_p[X_1, \ldots, X_m]$ as follows. If $\mon(Y_1, \ldots, Y_m,Z) = Y_1^{e_1}Y_2^{e_2}\cdots Y_m^{e_m}Z^e$, 
then define:
$$ \phi(\mon) =
\begin{cases}
\prodd_{i=1}^m X_i^{e_i} & \text{if } e = 0 \\
\left(\prodd_{i=1}^m X_i^{e_i} \right) \cdot X_j^{e}  & \text{if } e > 0, \text{ where } j\text{ is the first index}\\
& \text{ so that } e_j = k_j-1.
\end{cases}$$


Let  $\Delta = \{ \phi(\mon) \mid \mon \in \Gamma\}$ be the set of all such monomials constructed in this way.

Note that $\phi$ is a bijection, and $\phi$ preserves the degree of each monomial.  Thus, $\phi$ also gives a bijection 
when we restrict to monomials in $\Gamma$ and $\Delta$ of fixed total degree.  We defined $\phi$ so that $\phi^{-1}$ 
would have the following description:  Let $X_1^{f_1}X_2^{f_2}\cdots X_{m}^{f_{m}}\in \Delta$.  Let $e_i = \min\{f_{i},k_i-1\}$ for each $i \in [m]$.  Let $e = \summ_{i=1}^{m}{f_i} - \summ_{i=1}^{m}{e_i}$.
Then 
$$\phi^{-1}(X_1^{f_1}X_2^{f_2}\cdots X_{m}^{f_{m}}) = Y_1^{e_1}Y_2^{e_2}\cdots Y_m^{e_m} \cdot Z^e.$$
Note that by choice of $e$, $\phi^{-1}$ preserves degree.

With these definitions in hand, we proceed with the main parts of the proof.

\subsubsection*{Interpolating a polynomial} 
Suppose for contradiction that $|C| < \hat{k} \cdot \left( \prodd_{i=1}^{m}{k_i}\right) - (\hat{k} - 1) \cdot \left(\prodd_{i=1}^{m}(k_i-1) \right)$.  
Since $|\Delta| = |\Gamma| = \hat{k} \cdot \prodd_{i=1}^{m}{k_i} -(\hat{k} - 1) \cdot \prodd_{i=1}^{m}(k_i-1)$, there is a non-zero polynomial 
$f(X_1,\ldots,X_{m}) = \summ_{\K \in \Delta}{c_\K \K(X_1, \ldots, X_m)}$ which vanishes on $C$.  By the definition of $C$, this means that 
$g(Y_1,\ldots,Y_m, Z) \defeq f(Y_1+Z,\ldots, Y_m+Z)$ is a non-zero polynomial vanishing on every point
$(u_1,u_2, \ldots, u_m, v) \in \prodd_{i=1}^m U_i \times V$.

\subsubsection*{Application of the Combinatorial Nullstellensatz}
For each $1\leq i\leq m$, let $P_i(Y_i) = \prodd_{a\in U_i}(Y_i-a)$. 
Also let $P(Z) = \prodd_{a \in V} (Z-a)$.

By the Combinatorial Nullstellensatz,
$$g(Y_1,\ldots, Y_m, Z) \equiv 0 \pmod{ I},$$
where $I$ is the ideal generated by the $P_i(Y_i), i\in [m]$ and $P(Z)$.

%

Explicitly, we have that:
$$\sum_{\K \in \Delta} c_{\K}\K(Y_1+Z, Y_2 +Z, \ldots, Y_m + Z) \equiv 0 \pmod{I},$$
where at least one $c_\K$ is nonzero.

Consider the canonical reduction $\overline{g}$ of $g \mod I$: since $g \in I$
we get that $\overline{g} = 0$.
On the other hand, we have by linearity of canonical reduction:
$$\overline{g} = \sum_{\K \in \Delta} c_{\K}\overline{\K}(Y_1, Y_2, \ldots, Y_m, Z),$$
where $\overline{\K}(Y_1, Y_2, \ldots, Y_m, Z)$ is the canonical reduction mod $I$ of $\K(Y_1+Z, Y_2 +Z, \ldots, Y_m + Z)$.  
By Fact~\ref{fact}, any monomial $\mon$ that appears in the expansion of $\K(Y_1+Z, Y_2 +Z, \ldots, Y_m + Z)$ with $\deg(\mon) = \deg(\K)$ and is legal for $I$, also appears in $\overline{\K}(Y_1, Y_2, \ldots, Y_m, Z)$.

\subsubsection*{Arriving at a contradiction}
We may now summarize the strategy for the rest of the proof.
We will first find an ordering of the monomials in $\Delta$ such that:
\begin{enumerate}
\item If $\K, \K'$ are monomials in $\Delta$ with $\deg(\K') < \deg(\K)$,
then $\K'$ is smaller than $\K$ in the ordering.
\item For each $\K \in \Delta$, there is some monomial $\mon_\K(Y_1, \ldots, Y_m, Z)$ with the following four properties:
\begin{enumerate}
\item $\mon_{\K}$ appears the expansion of  $\K(Y_1+Z, Y_2 +Z, \ldots, Y_m + Z)$,
\item $\deg(\mon_{\K}) = \deg(\K)$,
\item $\mon_{\K}$ is legal for $I$,
\item $\mon_{\K}$ does not appear in the expansion of $\K'(Y_1+Z, Y_2+Z, \ldots, Y_m+Z)$ for any $\K' \in \Delta$ smaller than $\K$ in the ordering.
\end{enumerate}
\end{enumerate}

Once we have such an ordering, consider the largest $\K$ in the ordering
for which $c_{\K} \neq 0$. By Fact~\ref{fact}, $\mon_{\K}$ appears in $\overline{\K}(Y_1, \ldots, Y_m, Z)$. For every other $\K' \in \Delta$ with $c_{\K'} \neq 0$, we will show that $\overline{\K'}(Y_1, \ldots, Y_m, Z)$ does not include the monomial $\mon_{\K}$; this then shows that $\mon_{\K}$ appears in $\overline{g}$ with a nonzero coefficient, contradicting our equation $\overline{g} = 0$.
This gives the desired contradiction.

\subsubsection*{Monomial $\mon_{\K}$ does not appear in $\overline{\K'}$ (for $\K' \neq \K$ with $c_{\K'} \neq 0$)}
Suppose $\K' \in \Delta$, $\K' \neq \K$ and $c_{\K'} \neq 0$.
We will show that $\mon_{\K}$ does not appear in $\overline{\K'}$.
By choice of $\K$, we have that $\K'$ is smaller than $\K$ in 
the ordering, and hence that $\deg(\K') \leq \deg(\K)$.


Suppose $\mon_{\K}$ appeared in $\overline{\K'}$. Then the following
chain of inequalities:
$$\deg(\mon_{\K}) \leq \deg(\overline{\K'}) \leq \deg(\K'(Y_1 + Z, \ldots, Y_m +Z )
\leq \deg(\K') \leq  \deg(\K) = \deg(\mon_{\K}),$$
(because of the equality of the endpoints, this is a chain of equalities), shows that $\deg(\mon_{\K}) = \deg(\K'(Y_1 +Z, \ldots, Y_m+Z))$.
Thus by Fact~\ref{fact2}, we can conclude that $\mon_{\K}$ appears
in $\K'(Y_1+Z, \ldots, Y_m + Z)$. But this contradicts the 
property that $\mon_{\K}$ does not appear in $\K'(Y_1 + Z, \ldots, Y_m+Z)$
for any $\K' \in \Delta$ that is smaller than $\K$ in the ordering.
Thus $\mon_{\K}$ cannot appear in $\overline{\K'}$.






\subsubsection*{The ordering of $\Delta$}


All that remains now is to define the ordering of $\Delta$, and to prove 
the desired properties of this ordering.

Arrange the monomials in $\Delta$ in order of increasing total degree.  Within each fixed total degree, order by decreasing 
$\deg_{Z}(\phi^{-1}(\K(X_1, \ldots, X_m)))$.  Then for $\K(X_1, \ldots, X_m) \in \Delta$ in that ordering, set 
$\mon_{\K} = \phi^{-1}(\K(X_1,\ldots,X_m)) = Y_1^{e_1}Y_2^{e_2}\cdots Y_m^{e_m}Z^e$.  We claim that $\mon_{\K}$ satisfies 
the four properties listed above.

\begin{enumerate}[(a)]
\item \emph{$\mon_{\K}$ appears the expansion of  $\K(Y_1+Z, Y_2 +Z, \ldots, Y_m + Z)$}:

We show that the coefficient of $\mon_{\K}$ in $\K(Y_1+Z, Y_2 +Z, \ldots, Y_m + Z)$ is non-zero.  
By the definition of $\Delta$, $\deg_{X_i}(\K)\leq \hat{k}+k_i-2 < p$ for $i\in [m], \K\in \Delta$.  Also, there is at most 
one $i$ such that $\deg_{X_i}(\K) > k_i-1$.  Call this index $j$ if it exists, and let $l = \deg_{X_j}(\K)$.  
Since $\phi^{-1}(\K)$ extracts the largest powers of $Y_i$ in 
$\K(Y_1+Z, Y_2 +Z, \ldots, Y_m + Z)$ up to $k_i-1$ for $i\in [m]$, 
we get that the coefficient of $\mon_{\K}$ is $1$ if $j$ does not exist and 
$\binom{l}{k_j-1}$ if $j$ exists.  In both cases, the coefficient of $\mon_{\K}$ is non-zero in $F_p$ as $l < p$.

\item \emph{$\deg(\mon_{\K}) = \deg(\K)$}:

Recall that $\phi$ is a bijection from one set of monomials to another which preserves the degree of the monomials.  
So $\deg(\mon_{\K}) = \deg{\phi^{-1}(\K(X_1,\ldots,X_m))} = \deg(\K)$.

\item \emph{$\mon_{\K}$ is legal for $I$}:

Recall that writing $\K = X_1^{f_1}X_2^{f_2}\cdots X_{m}^{f_{m}}$, we have
$$\phi^{-1}(\K) = Y_1^{e_1}Y_2^{e_2}\cdots Y_m^{e_m} \cdot Z^e,$$
where $e_i = \min\{f_{i},k_i-1\}$ for each $i \in [m]$, and $e = \summ_{i=1}^{m}{f_i} - \summ_{i=1}^{m}{e_i}$.
So $e_i \leq k_i-1$, $\forall i\in [m]$.  It remains to show that $e \leq \hat{k}-1$.  
Suppose $f_i\leq k_i-1$, $\forall i \in [m]$.  Then $e_i = f_i$, $\forall i\in [m]$ and so $e=0$.  
Otherwise, $f_i\leq k_i-1$ for all but one $i\in [m]$, call this index $j$.  
We have $e_i = f_i$ for $i\neq j$ and $e_j = k_j-1$.  So 
$$e = f_j-e_j \leq \hat{k}+k_j-2 - (k_j-1) = \hat{k}-1.$$

\item \emph{$\mon_{\K}$ does not appear in the expansion of $\K'(Y_1+Z, Y_2+Z, \ldots, Y_m+Z)$ for any $\K' \in \Delta$ smaller than $\K$ in the ordering}:

To show that the monomials selected by $\phi^{-1}$ do not appear in any previous entries of the ordering, 
first note that the degree of $\mon_{\K}$ is too large to have appeared in any previous $\K'\in \Delta$ of lower total degree.  
Next, consider the expansion of a previous $\K'(Y_1+Z, Y_2 +Z, \ldots, Y_m + Z)$ in the ordering of the same total degree, 
then $\mon_{\K'} = \phi^{-1}(\K'(X_1,\ldots,X_m)) = Y_1^{e_1'}Y_2^{e_2'}\cdots Y_m^{e_n'}Z^{e'}$ must have $e_i'<e_i\leq k_i-1$ for some 
$i\in [m]$, as $e'\geq e$.  By the way $\phi$ is defined, this means that $\deg_{X_i}(\K') = e_i'$, so 
$\deg_{Y_i}(\K'(Y_1+Z, Y_2 +Z, \ldots, Y_m + Z)) = e_i'$.  But $\deg_{Y_i}(\mon_{\K}) = e_i > e_i'$.  
So $\mon_{\K}$ cannot be a monomial in the expansion of $\K'(Y_1+Z, Y_2 +Z, \ldots, Y_m + Z)$.

\end{enumerate}


This completes the proof that the ordering of $\Delta$ has the desired properties, and hence we arrive at a contradiction.

Thus we must have that $|C| \geq \hat{k} \cdot \left( \prodd_{i=1}^{m}{k_i}\right) - (\hat{k} - 1) \cdot \left(\prodd_{i=1}^{m}(k_i-1) \right)$.
\end{proof}

\subsection{Proving the main result}

We now combine Lemma~\ref{lem:transform} and Theorem~\ref{thm:GenCD} to prove our main theorem, Theorem~\ref{thm:main}.

\medskip
\noindent{\bf Proof of Theorem~\ref{thm:main}: }

By basic linear algebra, we have that $|S| \leq m+1$, and $|S \cup S'| = m+1$.

We first get rid of the coordinates in $[n] \setminus (S \cup S')$.  Observe that taking away elements from any of the sets 
$A_i$ cannot increase the size of the image $|L(A_1, \ldots, A_n)|$.
Let $\textbf{a}\in\prodd_{i\in [n]\setminus (S\cup S')}{A_i}$.  
Fix the coordinates in $[n] \setminus (S \cup S')$ to $\textbf{a}$ and consider the resulting
map $M: \F_p^{m+1} \to \F_p^m$ (i.e., $M(\mathbf{x}) = L(\mathbf{x}, \mathbf{a})$).
If $L':\F_p^{m+1} \to \F_p^m$ is the linear map 
obtained by restricting the coordinates of $[n] \setminus (S \cup S')$ to $0$,
then the image $L'\left(\prodd_{i\in S\cup S'}{A_i}\right)$ is a translate of the 
image of $M\left(\prodd_{i\in S\cup S'}{A_i}\right)$.  So we have:
$$|L(A_1, \ldots, A_n)|\geq \left|M\left(\prodd_{i\in S\cup S'}{A_i}\right)\right| = L'\left(\prodd_{i\in S\cup S'}{A_i}\right).$$

Then a lower bound on $L'\left(\prodd_{i\in S\cup S'}{A_i}\right)$ gives a lower bound on $|L(A_1, \ldots, A_n)|$.

The next step is to use the simple transformations in Lemma~\ref{lem:transform} to greatly simplify our linear map $L'$, 
while preserving $\mu(L,k_1,\ldots,k_n)$.  The transformations allow us to apply elementary row operations on $L'$, 
scale the columns of $L'$, and rearrange the columns of $L'$.

As $L'$ has rank $m$, $\ker(L')$ has rank $1$.  Consider a nonzero vector $v\in\ker(L')$.  
Then $S$ must be the support of $v$.  
Let $\hat{i}$ be the index in $S$ that minimizes $k_i$, i.e. $\hat{i} = \arg\min_{i\in S}{k_i}$.  

With the above row and column operations at our disposal, we perform the following reduction of the problem.  First, 
permute the columns so that the columns with indices in $S$ are on the left and move column $\hat{i}$ so that 
it is the first column.  Then the last $m$ columns are now linearly independent.  
This is because if they were linearly dependent, there would be a nonzero vector in the kernel of $L$ whose support 
does not include $\hat{i}$.  So there would be two nonzero vectors in $\ker(L)$ with different supports, which is impossible.  
Next, apply the sequence of elementary row 
operations that turns the last $m$ columns into the identity matrix.  Scale each row so that the first element is 
either $0$ or $1$.  Finally, scale each of the last $m$ columns so that they again form the identity matrix.  We are 
left with a column of $1$'s and $0$'s followed by the $m$ by $m$ identity matrix.  We will call this matrix $\hat{L'}$, the 
reduction of $L'$.  

$$\hat{L'} = \left(
\begin{array}{c|ccccc}
1 			& &&    && \\
\vdots  & &&   	&& \\
1 			& && I_m&& \\
0 			& &&   	&& \\
\vdots  & &&   	&& \\
0 			& &&   	&& \\
\end{array}
\right).$$

Considering the projection $P$ of the image of $\hat{L'}(\prod_{i\in S} A_i, \prod_{i\in S'} A_i)$ onto the first $|S|-1$ coordinates, 
we find ourselves in the setting of Theorem~\ref{thm:GenCD}.  
Letting $U = A_{\hat{i}}$, and $\{V_1,\ldots,V_{|S|-1}\} = \{A_i\mid i\in S-\{\hat{i}\}\}$, Theorem~\ref{thm:GenCD} tells us that 
$$|P|\geq \prod_{i\in S} k_i - \prod_{i\in S} (k_i-1).$$

Finally, note that as $\textbf{a}'$ varies in the set $\prod_{i\in S'}A_i$, the sets 
$\hat{L'}(U,V_1,\ldots,V_{|S|-1}, \textbf{a}')$ are all translates of $P$ and are
disjoint (the disjointness follows from the fact that $\supker(L) \cap 2^{S \cup S'} = \{S \}$).
Hence, the total size of the image of $\hat{L'}$ is at least $|P| \cdot \prod_{i \in S'} |A_i|$, which
is at least:
$$\left(\left(\prod_{i\in S} k_i\right) - \left(\prod_{i\in S} (k_i-1)\right) \right) \cdot \left(\prod_{i \in S'} k_i \right),$$
as desired.






\hfill\qed

\noindent {\bf Proof of Lemma~\ref{lem:realtightness}: }

We first provide a tight example for our lower bound when $p\geq 2k-1$.  
Using the same transformations as above, we produce the simple linear transformation $\hat{L}$ from $L$.  
Lemma~\ref{lem:transform} implies that providing a tight example for $\hat{L}$ implies the existence of a tight example for $L$.  
We claim that setting $A_i = \{0,\ldots,k_i-1\}$ attains the smallest possible image size   
$\left(\prod_{i\in S} k_i - \prod_{i\in S} (k_i-1) \right) \cdot \prod_{i \in S'} k_i$.

As before, every choice of $\textbf{a}\in \prod_{i\notin S}A_i$ yields $|P|$ distinct points in the image of $\hat{L}$, 
where $P$ is the projection of $\hat{L}(\prod_{i\in S} A_i, \prod_{i\in S'} A_i)$ onto the first $|S|-1$ coordinates.  
So it suffices to show that $|P| \geq \left(\prod_{i\in S} k_i - \prod_{i\in S} (k_i-1) \right)$. 
This is equivalent to showing that equality is attained in Theorem~\ref{thm:GenCD} 
when the sets are all taken to be intervals starting from $0$.

Suppose we have sets $U_i = \{0,\ldots,k_i-1\}, i\in [m]$ and $V=\{0,\ldots,\hat{k}-1\}$.  We want to show that
$C = \{(u_1+v,u_2+v,\ldots,u_m+v)|u_i\in U_i \text{ for each $i\in [m]$}, v \in V \}$ has size exactly equal to 
$\hat{k} \cdot \prodd_{i=1}^{m}{k_i}  - (\hat{k}-1) \cdot \prodd_{i=1}^{m}(k_i-1)$ as long as $p\geq \hat{k}+k_i-1$.  
In particular, this will give a tight example when the set sizes are all the same.

Let $C_j = \{(u_1+j,u_2+j,\ldots,u_m+j)|u_i\in U_i \text{ for each $i\in [m]$}\}, j = 0,\ldots,\hat{k}-1$.  Then 
$C = \bigcup_{j=0}^{\hat{k}-1}C_j$.  
We start with $|C_0| = \prodd_{i=1}^{m}{k_i}$, and ask how many additional elements we add when we take 
the union with $C_1$:

\begin{eqnarray*}
|C_1-C_0| &=& |C_1|-|C_1 \cap C_0|\\
&=& \prodd_{i=1}^{m}{k_i} - \prodd_{i=1}^{m}{(k_i-1)}.
\end{eqnarray*}

Since $p\geq \hat{k}+k_i-1$, none of the sums that we take exceed $p-1$, so we will continue to add 
$\prodd_{i=1}^{m}{k_i} - \prodd_{i=1}^{m}{(k_i-1)}$ for each successive $C_j$.  Total this gives 
$\prodd_{i=1}^{m}{k_i} + (\hat{k}-1) \cdot\left(\prodd_{i=1}^{m}{k_i} - \prodd_{i=1}^{m}{(k_i-1)}\right)$, which is equal to 
$\hat{k} \cdot \prodd_{i=1}^{m}{k_i}  - (\hat{k}-1) \cdot \prodd_{i=1}^{m}(k_i-1)$.

We now show that the lower bound is not tight when $p < 2k-1$.  In fact, the same example of taking the sets to be intervals 
will produce an image whose size is strictly smaller than our lower bound.  
Let $U_i = \{0,\ldots,k-1\}, i\in [m]$ and $V=\{0,\ldots,k-1\}$ in the statement of Theorem~\ref{thm:GenCD}.  
We want to show that $C = \{(u_1+v,u_2+v,\ldots,u_m+v)|u_i\in U_i \text{ for each $i\in [m]$}, v \in V \}$ 
has size strictly less than $k^{m+1} - (k-1)^{m+1}$.  

As before, let $C_j = \{(u_1+j,u_2+j,\ldots,u_m+j)|u_i\in U_i \text{ for each $i\in [m]$}\}, j = 0,\ldots,k-1$.  Then 
$C = \bigcup_{j=0}^{k-1}C_j$.  Note that the element $(k-1+p-k+1,\ldots,k-1+p-k+1,k-1+p-k+1) = (0,\ldots,0) \in C_{p-k+1}$ 
is in $C_0$.  But this was one of the ``new'' elements of $C_{p-k+1}$ that we counted in the argument for the tight example, 
which was previously not in any of the $C_i$, for $i<p-k+1$.  Hence, the number $k^{m+1} - (k-1)^{m+1}$ is a strict overcount 
for the number of elements in the image.

\hfill\qed

\section{Linear maps of smaller rank}
\label{sec:example}

Our lower bound in the general case $n > m-1$ is not tight for every linear map. The main reason for this is that our proof strategy only uses information about the support of vectors in the kernel of $L$ (and not the actual vectors). As the following example shows, if $m < n -1$ the optimal lower bound for
$L(A_1, \ldots, A_n)$ may not be determined solely be the set of all supports of vectors in $\ker(L)$.

\begin{example}
Let $p$ be a large prime, and let $k \ll p$.
Consider the following $2 \times 4$ matrices over $\F_p$:
\begin{align*}
M &= \left[ \begin{array}{cccc} 1 & 0 &\ 2\ & 1 \\ 0 & 1 & 1 &\ 2\ \end{array} \right],  \\
M' &= \left[ \begin{array}{cccc} 1 & 0 & 100 & 1 \\ 0 & 1 & 1 & 100 \end{array} \right].
\end{align*}

Define $L : \F_p^4 \to \F_p^2 $ and $L': \F_p^4 \to \F_p^2$
by $L(x) = Mx$ and $L'(x) = M'x$.
Observe that $\supker(L)$ and $\supker(L')$ both equal ${ [4] \choose \geq 3}$.

Letting $A_1, A_2 , A_3,A_4=   \{1,2,\ldots, k\} \subseteq \F_p$, then  $|L(A_1, A_2, A_3 ,A_4)| \leq 16 k^2$.

In contrast, we will show in Lemma~\ref{lem:100} that $|L'(A_1', A_2',A_3', A_4')| \geq 100 k^2$,  for any $k$-elements sets $A_1', A_2', A_3' , A_4'  \subseteq \F_p$,

\end{example}

Our analysis of this example will use some results on ``sums of dilates''.
For a constant $\lambda$ and a set $A$, we define the {\em dilate} $\lambda A$ denote the set $\{ \lambda a \mid a \in A \}$.
 We will use the following result of Pontiveros \cite{dilates} (which builds on a beautiful result of Bukh~\cite{Bukh})
on sums of dilates in  $\mathbb{Z}_p$. 
\begin{lemma}\label{Pontiveros}
 For every coprime $\lambda_1,\cdots, \lambda_n \in \mathbb{Z}$, there exists a constant $\alpha>0$ such that $|\lambda_1 X+\lambda_2 X+\cdots+ \lambda_n X| \geq\big( \sum \lambda_i \big)\cdot|X| -o(|X|)$, for sufficiently large prime~$p$, and every $X\subseteq \mathbb{Z}_p$, with $|X|\leq \alpha p$.
\end{lemma}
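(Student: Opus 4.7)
The plan is to reduce the $\mathbb{Z}_p$ statement to the analogous integer version, which is Bukh's theorem \cite{Bukh} on sums of dilates, asserting $|\lambda_1 Y + \cdots + \lambda_n Y| \geq (\sum |\lambda_i|) \cdot |Y| - o(|Y|)$ for any $Y \subseteq \mathbb{Z}$. The bridge between the two settings is a Freiman rectification: when $|X| \leq \alpha p$ with $\alpha$ sufficiently small in terms of $s := \sum_i |\lambda_i|$, the set $X$ can be ``straightened out'' into a set of integers while preserving every linear relation that is relevant for the sum of dilates.

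More concretely, I would first invoke a Green--Ruzsa style rectification lemma: for each integer $s \geq 1$ there is a constant $\alpha = \alpha(s) > 0$ such that any $X \subseteq \mathbb{Z}_p$ with $|X| \leq \alpha p$ admits a Freiman $s$-isomorphism $\varphi : X \to X' \subseteq \mathbb{Z}$. Taking $s = \sum_i |\lambda_i|$, this isomorphism preserves every identity of the form $\lambda_1 x_{i_1} + \cdots + \lambda_n x_{i_n} = \lambda_1 x_{j_1} + \cdots + \lambda_n x_{j_n}$ with the arguments ranging in $X$, and therefore gives a bijection between $\lambda_1 X + \cdots + \lambda_n X \subseteq \mathbb{Z}_p$ and $\lambda_1 X' + \cdots + \lambda_n X' \subseteq \mathbb{Z}$. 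In particular, the two sumsets have the same cardinality.

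Second, I would apply Bukh's theorem to $X' \subseteq \mathbb{Z}$, obtaining the lower bound $(\sum |\lambda_i|) \cdot |X'| - o(|X'|) = (\sum |\lambda_i|) \cdot |X| - o(|X|)$. Combined with the rectification step, this yields the claimed inequality, with $\alpha$ equal to the rectification threshold $\alpha(s)$.

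The most delicate step is the rectification: one has to verify that the threshold $\alpha$ can be chosen depending only on $s$ (not on $X$), and that the resulting $s$-isomorphism really does act on sums of dilates as claimed. The former is the hard part and is typically handled by a Fourier/Bohr-set pigeonhole argument inside $\mathbb{Z}_p$, which is where the assumption $|X| \leq \alpha p$ is used crucially; the latter is then immediate from the definition of an $s$-isomorphism once $s \geq \sum |\lambda_i|$. A direct adaptation of Bukh's integer proof to $\mathbb{Z}_p$, bypassing rectification, would require replacing his use of the real-line ordering by some ``approximate order'' controlled by $|X|/p$, which seems considerably less clean than going through integers.
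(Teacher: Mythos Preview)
The paper does not prove Lemma~\ref{Pontiveros}; it is quoted from Pontiveros~\cite{dilates} and used as a black box in Lemma~\ref{lem:100}. There is thus no in-paper argument to compare your sketch against.

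Your sketch, however, has a genuine gap: the rectification principle you invoke is false as stated. It is not true that for each $s$ there is an $\alpha(s)>0$ such that \emph{every} $X\subseteq\mathbb{Z}_p$ with $|X|\le\alpha(s)\,p$ is Freiman $s$-isomorphic to a set of integers. A clean obstruction already appears at $s=2$: for any finite $Y\subseteq\mathbb{Z}$ the representation function $r_Y(t)=|\{(a,b)\in Y^2:a+b=t\}|$ equals $1$ at $t=2\min Y$, whereas a uniformly random $X\subseteq\mathbb{Z}_p$ of size $\alpha p$ has $r_X(t)\sim\alpha^2 p$ for \emph{every} $t$ with high probability; since a Freiman $2$-isomorphism carries the multiset $\{r_X(t):t\in X+X\}$ bijectively onto $\{r_Y(t):t\in Y+Y\}$, no such isomorphism can exist once $\alpha^2 p>1$. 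The rectification theorems that are actually available (Freiman; Green--Ruzsa) all carry an additional small-doubling hypothesis on $X$. To rescue your outline one must first argue that if $|\lambda_1X+\cdots+\lambda_nX|$ falls below the target then $X$ has bounded doubling (via Pl\"unnecke--Ruzsa, say), then rectify under that hypothesis, then apply Bukh, and finally track the $o(|X|)$ losses through all three steps --- and carrying that out is the substance of~\cite{dilates}, not a lemma one may cite on the way to it.
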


We use this estimate on the size of the sum of dilates, to construct linear maps with arbitrarily large image.   

\begin{lemma}\label{lem:100}
For every positive integer constant $c$,   there is a linear map $L: \F_p^4 \to \F_p^2$
such that for  every  $A_1, A_2, A_3, A_4 \subseteq \F_p$ with $|A_i| = k$, and any prime $p$ sufficiently larger than $k$,
we have:
$$ L(A_1, A_2, A_3, A_4) \geq c k^2.$$

\end{lemma}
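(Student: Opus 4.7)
The plan is to choose a linear map $L$ whose ``mixed area'' invariant is at least $c$, and then to lower bound $|L(A_1,A_2,A_3,A_4)|$ by this mixed area times $k^2$ via Pontiveros' Lemma applied to several projections. Concretely, given $c$, I would take $L: \F_p^4 \to \F_p^2$ with matrix
\[
\begin{pmatrix} 1 & 0 & 1 & 1 \\ 0 & 1 & 1 & N \end{pmatrix}
\]
for a positive integer $N$ with $2N+3 \geq c$ (this generalizes $M'$ in the example, with $N$ playing the role of $100$). Let $v_1,v_2,v_3,v_4 \in \F_p^2$ denote the columns of this matrix, so that the image decomposes as the 4-fold Minkowski sum $L(A) = v_1 A_1 + v_2 A_2 + v_3 A_3 + v_4 A_4$ in $\F_p^2$.

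The first step is to verify the bound for the ``AP case'' $A_i = \{0,1,\dots,k-1\}$. Here a direct integer-geometric count (as in the proof of Lemma~\ref{lem:realtightness}) shows that $|L(A)|$ equals the number of lattice points in the zonotope generated by the segments $\{tv_i : 0\leq t\leq k-1\}$, and this in turn equals $\bigl(\sum_{i<j}|\det(v_i,v_j)|\bigr)k^2 - O(k) = (2N+3)k^2 - O(k)$. So for $A_i$ intervals we already have $|L(A)|\geq ck^2$ for $k$ large enough.

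The main step is to extend this lower bound from intervals to arbitrary $A_i$ of size $k$, for $p$ sufficiently large relative to $k$. For each nonzero $\pi \in \F_p^2$ the projection $\pi(L(A))$ is a sum of dilates $\lambda_1 A_1 + \lambda_2 A_2 + \lambda_3 A_3 + \lambda_4 A_4$ with coefficients $\lambda_i = \pi\cdot v_i$ that depend on $\pi$ and on $L$. By choosing $\pi$ so that the $\lambda_i$ are coprime integers (which the matrix $M$ affords for many $\pi$ since its $2\times 2$ minors are $1,1,1,1,N,N-1$), I would apply Pontiveros' Lemma~\ref{Pontiveros} to each such projection and then combine several projections using a 2D fiber-counting / Cauchy--Schwarz argument to recover a lower bound of the form $M(L)\cdot k^2 - o(k^2)$ on $|L(A)|$ itself, where $M(L) = \sum_{i<j}|\det(v_i,v_j)| = 2N+3$.

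The hard part will be bridging the gap between Pontiveros' Lemma, which bounds $|\lambda_1 X + \cdots + \lambda_n X|$ for a \emph{single} set $X \subseteq \F_p$, and our projections, which are sums of dilates of the four potentially distinct sets $A_1,\ldots,A_4$. I see two natural routes: either prove a Freiman-type rigidity statement showing that any configuration achieving $|L(A)|\leq ck^2$ forces all $A_i$ to be arithmetic progressions with a common compatible difference (reducing to the single-set case, where Pontiveros applies directly), or replace the four sets by an ambient covering set $X = A_1\cup A_2\cup A_3\cup A_4$ of size at most $4k$ and absorb the loss into a constant factor. Either route relies essentially on the $o(|X|)$ error term in Pontiveros' bound, which is the reason the hypothesis ``$p$ sufficiently larger than $k$'' is needed.
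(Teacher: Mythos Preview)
Your proposal correctly identifies Pontiveros' lemma as the key input and pinpoints the genuine obstacle: the lemma controls $|\lambda_1 X + \cdots + \lambda_n X|$ for a \emph{single} set $X$, whereas the projections of $L(A_1,\ldots,A_4)$ involve four possibly unrelated sets. But you do not actually cross that bridge. Neither of your two suggested routes is carried out: a Freiman-type rigidity argument would be substantial work in its own right, and the ``ambient set $X = A_1\cup\cdots\cup A_4$'' trick goes in the wrong direction (you get $\lambda_1 A_1 + \cdots + \lambda_4 A_4 \subseteq \lambda_1 X + \cdots + \lambda_4 X$, which gives an upper bound, not a lower one). The zonotope/AP analysis and the vague ``combine projections by Cauchy--Schwarz'' step do not address general $A_i$ either. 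So as it stands the proposal is an outline with the main step missing.

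The paper's proof sidesteps exactly this difficulty with a short trick you did not consider. It takes the map with matrix $\left(\begin{smallmatrix}1&0&c&1\\0&1&1&c\end{smallmatrix}\right)$ and observes, via the \emph{Ruzsa triangle inequality}, that
\[
|A_4 + c^2 A_4|\ \le\ \frac{|A_4 + cA_3|\,|cA_3 + c^2 A_4|}{|cA_3|}\ =\ \frac{|A_4 + cA_3|\,|A_3 + cA_4|}{|A_3|}.
\]
The left side is a sum of dilates of the \emph{single} set $A_4$, so Pontiveros gives $|A_4 + c^2 A_4|\ge c^2|A_4|$ for $p$ large, and hence $|A_3 + cA_4|\cdot|A_4 + cA_3|\ge c^2k^2$. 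Without loss $|A_3 + cA_4|\ge ck$; then fixing $a_2$ and varying $(a_3,a_4)$ gives $\ge ck$ distinct second coordinates, and for each such choice varying $a_1$ gives $k$ distinct first coordinates, so $|L(A_1,\ldots,A_4)|\ge ck^2$. The whole argument is a few lines: the Ruzsa inequality is precisely the device that converts the two-set dilate sum $A_3 + cA_4$ (which you cannot feed directly into Pontiveros) into a one-set dilate sum $A_4 + c^2 A_4$ (which you can). That is the idea your plan is missing.
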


\begin{proof}
Consider the linear map  
$$L(A_1,A_2,A_3,A_4) = \{(a_1+c\cdot a_3 +a_4 ,\, a_2 + a_3 + c\cdot a_4)|\, (a_1,a_2,a_3,a_4) \in A_1 \times A_2 \times A_3\times A_4 \}$$ 
and let $A_1,A_2,A_3,A_4 \subseteq F_p$ be any  $k$-elements sets.

	By Ruzsa triangle inequality \cite{TaoandVu}, 
$$| A_4+ c^2A_4 | \leq \frac{ |A_4+c A_3| \,|c A_3+c^2A_4|}{|c A_3|}=  \frac{ |A_4+c A_3| \,|A_3+c A_4|}{|A_3|}$$

	From Lemma \ref{Pontiveros} we know that $| A_4+ c^2 A_4 |\geq c^2 |A_4|$, assuming $p$ sufficiently larger than $k=|A_4|$. 

 Hence  $|A_4+c A_3| \cdot|A_3+c A_4| \geq c^2 |A_4| |A_3| = c^2 k^2$. 

Without loss of generality,  assume that $|A_3+c A_4|\geq c k$.

In particular, fixing $a_2 \in A_2$, and an element $(a_3+c\cdot a_4) \in A_3+c A_4$, the subset $ \{ (a_1+ (c\cdot a_3+a_4), a_2 + a_3 + c\cdot a_4)| a_1 \in A_1\}$  has at least $k$ elements, all with the same second coordinate.
Therefore holding some element $a_2 \in A_2$ fixed, and letting $a_3\in A_3,a_4\in A_4$ be any elements, we obtain $|A_3+c A_4|$ distinct second coordinates, and so  $$|\{ (a_1+ c\cdot a_3+a_4, a_2 + a_3 + c\cdot a_4)\}| a_1 \in A_1, a_3 \in A_3, a_4 \in A_4\}|\geq c k^2$$

We conclude that $|L(A_1,A_2,A_3,A_4)|\geq c k^2$.
\end{proof}

\section{Questions}

We conclude with some interesting open questions.
\begin{enumerate}
\item The main open question is to obtain the best bound for the Cauchy-Davenport problem for every linear map.
\item Even for the case $m = n-1$ and all the $k_i$ equal to $k$,
we do not know the optimal bound for the Cauchy-Davenport problem when $p < 2k-1$. Our method can be extended
to give a better bound, but we believe that this is not the optimal bound.
\item What can be said about the ``symmetric" Cauchy-Davenport problem: what is smallest
possible size of $L(A, A, \ldots, A)$ over all sets $A$ with $|A| = k$?
This seems to be closely related to the theory of sums of dilates.
\item Even over $\mathbb R$, finding the optimal bound for the Cauchy-Davenport problem for every linear map seems nontrivial.
\item It will be interesting to study analogues of other theorems of additive combinatorics
 in the setting of linear maps.
\end{enumerate}

\bibliographystyle{alpha}

\end{document}